\documentclass[11pt]{article}
\usepackage{amsmath,amssymb,amsthm}
\usepackage{color}
\usepackage[english]{babel}
\usepackage[utf8]{inputenc}

\usepackage{fancybox}
\usepackage{listings}
\usepackage{mathtools}
\usepackage{verbatim}

\usepackage{float}
\usepackage{makeidx}

\setlength{\textwidth}{16cm}

\setlength{\textheight}{22.5cm}

\setlength{\topmargin}{-1.5cm}

\setlength{\oddsidemargin}{-1mm}

\setlength{\evensidemargin}{-1mm}

\setlength{\abovedisplayskip}{3mm}

\setlength{\belowdisplayskip}{3mm}

\setlength{\abovedisplayshortskip}{0mm}

\setlength{\belowdisplayshortskip}{2mm}






\newtheorem{Theorem}{Theorem}[section]

\newtheorem{Definition}[Theorem]{Definition}

\newtheorem{Proposition}[Theorem]{Proposition}

\newtheorem{Lemma}[Theorem]{Lemma}

\newtheorem{Corollary}[Theorem]{Corollary}

\newtheorem{Remark}[Theorem]{Remark}

\newcommand{\RR}{{{\rm I} \kern -.15em {\rm R} }}

\newcommand{\C}{{{\rm l} \kern -.42em {\rm C} }}

\newcommand{\nat}{{{\rm I} \kern -.15em {\rm N} }}

\newcommand{\be}{\begin{equation}}
\newcommand{\ee}{\end{equation}}
\newcommand{\beq}{\begin{eqnarray}}
\newcommand{\eeq}{\end{eqnarray}}
\newcommand{\beqs}{\begin{eqnarray*}}
\newcommand{\eeqs}{\end{eqnarray*}}
\newcommand{\bt}{\begin{Theorem}}
\newcommand{\et}{\end{Theorem}}
\newcommand{\br}{\begin{Remark}}
\newcommand{\er}{\end{Remark}}
\newcommand{\bc}{\begin{Corollary}}
\newcommand{\ec}{\end{Corollary}}
\newcommand{\bl}{\begin{Lemma}}
\newcommand{\el}{\end{Lemma}}
\newcommand{\bd}{\begin{definition}}
\newcommand{\ed}{\end{definition}}
\newcommand{\bp}{\begin{Proposition}}
\newcommand{\eP}{\end{Proposition}}

\title{Opinion dynamics of  two populations with time-delayed coupling}

\bigskip

\author{Chiara Cicolani\footnote{Email: chiara.cicolani@graduate.univaq.it}\hspace{0.3cm}\&\hspace{0.2cm}Cristina Pignotti\footnote{Email: cristina.pignotti@univaq.it.} \\Dipartimento di Ingegneria e Scienze dell'Informazione e Matematica\\
		Universit\`{a} degli Studi di L'Aquila\\
		Via Vetoio, Loc. Coppito, 67100 L'Aquila Italy}
	
\begin{document}
\maketitle

\begin{abstract}
We study a Hegselmann-Krause type opinion formation model for a system of two populations. The two groups interact with each other via subsets of individuals, namely the leaders, and natural time delay effects are considered. By using careful estimates of the system's trajectories, we are able to prove an asymptotic convergence to consensus result. Some numerical tests illustrate the theoretical result and point out some possible applications.
  \end{abstract}

\section{Introduction}
Due to several applications in many scientific fields, multiagent systems have become in the last years a very attractive research topic. They naturally appear e.g. in biology \cite{Cama, Carrillo, CS1}, ecology \cite{Sole}, economics \cite{Carrillo, Marsan}, social sciences \cite{Bellomo, BN, BHT, Castellano, Campi}, physics \cite{DH, Stro}, control theory \cite{D, PaolucciP, PRT, WCB}, engineering and robotics \cite{Bullo, Desai}. For more and other applications see also \cite{Hel, Jack, XWX}.
An important feature often analyzed is the possible emergence of self-organization leading the group's agents to globally collective behaviors. Here, we are interested in the celebrated Hegselmann-Krause  model for opinion formation, originally proposed in \cite{HK}. Since then, several generalizations have been proposed (see e.g. \cite{Bellomo, BN, CFT, CFT2, Ceragioli, JM}).

One of the most natural extensions of the Hegselmann-Krause model concerns the analysis of the time-delayed interactions, in order to take into account the times necessary for each agent to receive information from other agents or  reaction times. Opinion formation models in presence of time delay effects have  already been studied  by several authors, see e.g.  \cite{CPP,2,onoff,3,4,P,PaolucciP}. Concerning the second-order version of the Hegselmann-Krause model, namely the Cucker-Smale model \cite{CS1}, introduced to describe flocking phenomena, delayed interactions have also been considered in many papers, see e.g.   \cite{Chen,CH,CL,CP,Cont, EHS,HM,PR,PT,Cartabia}.

 In this paper, we are interested in studying the convergence to consensus for  a Hegselmann-Krause type opinion formation model involving two populations with time-delayed coupling. Indeed, it is natural to assume that, while one can consider almost instantaneous the influence among agents in the same population, a certain time lag appears in the interaction among individuals of different populations.  A more general model would include time delays (eventually smaller) also in the interactions in the same population. Here, for simplicity, we choose to consider delay effects only in the interactions among agents of different populations. An analogous analysis could be performed also in the more  general situation with multiple delays.

Consider two finite sets of $N$ and $M$ agents respectively, with $N, M \in \nat$, $N, M \geq 2 \ .$ Without loss of generality, we assume $M \leq N.$
Let $x_i(t) \in \RR^d, i=1,\dots, N,$ be the opinion of the i-th particle of the first family at time $t$ and $y_i(t) \in \RR^d$, $i=1,...,M,$ be  the opinion of the i-th particle of the second family at time $t.$  We consider that a (small) group of agents of the first family interacts with another (small) group of the second family with a time delay appearing as a time needed from an agent of a population to receive information from agents of the other one. The time delay is assumed to be a positive constant, $\tau>0.$ Given $h,k \in \nat$, 
$h < M \mbox{ and } k < N $, the opinions of the two populations evolve following  the 
Hegselmann-Krause opinion formation model: 
\begin{equation}
\begin{split}
    & \frac{d}{d t}x_i(t)= \sum_{ j \neq i } a_{ij}(t)(x_j(t)-x_i(t)) + \sum_{j=1}^{h} \epsilon_{ij}(t)(y_j(t-\tau)-x_i(t)), \quad t>0,\ i=1,...,k, \\
    & \frac{d}{d t}x_i(t)= \sum_{j \neq i } a_{ij}(t)(x_j(t)-x_i(t)), \quad t>0,\ i=k+1,...,N, \\
    & \frac{d}{d t}y_i(t)= \sum_{j \neq i } b_{ij}(t)(y_j(t)-y_i(t)) + \sum_{j=1}^{k} \eta_{ij}(t)(x_j(t-\tau)-y_i(t)), \quad t>0,\ i=1,...,h, \\
    & \frac{d}{d t}y_i(t)= \sum_{j \neq i } b_{ij}(t)(y_j(t)-y_i(t)), \quad t>0,\ i=h+1,...,M, 
    \end{split}
\label{eq1}
\end{equation}

with the interaction weights $a_{ij}(t), t\ge 0,$ of the form:
\begin{equation}
    \begin{split}
        a_{ij}(t):= \frac{1}{N+h-1}\psi(x_i(t), x_j(t)), \quad \  i=1,...,k, \ j=1, \dots, N,\\
        a_{ij}(t):= \frac{1}{N-1} \psi(x_i(t),x_j(t)), \quad \  i=k+1,...,N,\ j=1, \dots, N,\\
    \end{split}
    \label{eq2}
\end{equation}

and the weights $b_{ij}(t), t\ge 0,$ of the form:
\begin{equation}
    \begin{split}
        b_{ij}(t):= \frac{1}{M+k-1}\psi^*(y_i(t), y_j(t)), \quad  \ i=1,...,h,\ j=1, \dots, M, \\
        b_{ij}(t):= \frac{1}{M-1} \psi^*(y_i(t), y_j(t)), \quad \ i=h+1,...,M, \ j=1, \dots, M.\\
    \end{split}
    \label{eq3}
\end{equation}

Here, $\psi: \RR^d\times\RR^d\rightarrow \RR$ and $\psi^*: \RR^d\times\RR^d\rightarrow \RR$ are continuous, positive and bounded functions. Moreover, the interaction coefficients $\epsilon_{ij}(t)$ and $\eta_{ij}(t),$ for $t\ge 0,$ among individuals of different populations have the form:
\begin{equation}
\begin{split}
    \epsilon_{ij}(t):= \frac{1}{N+h-1}\phi(x_i(t), y_j(t-\tau)), \quad  \ i=1,...,k,\ j=1, \dots, h,\\
    \eta_{ij}(t):= \frac{1}{M+k-1}\phi^*(y_i(t),  x_j(t-\tau)), \quad  \ i=1,...,h,\ j=1, \dots, k,
\end{split} 
\end{equation}
where $\phi: \RR^d\times\RR^d \rightarrow \RR$ and $\phi^*: \RR^d\times\RR^d\rightarrow \RR$ are continuous, positive and bounded functions.
Let  us denote
$$ \Lambda := \max \Big \{ \| \psi\|_{\infty}, \| \psi^* \|_{\infty}, \| \phi \|_{\infty}, \| \phi^* \|_{\infty} \Big \}\,. $$

Let us assume the initial conditions: 
\begin{equation}
    \begin{cases}
        x_i(t)=x_i^0(t), \quad i=1,...,k, \ t \in [-\tau,0], \\
        x_i(0)=x_i^0, \quad i=k+1,...,N, \\
        y_i(t)=y_i^0(t), \quad i=1,...,h, \ t \in [-\tau,0], \\
        y_i(0)=y_i^0, \quad i=h+1,...,M, 
    \end{cases}
    \label{init}
\end{equation}
where $x_i^0(\cdot), i=1, \dots, k,$  $y_i^0(\cdot), i=1, \dots, h,$ are continuous functions defined on $[-\tau, 0],$ $x_i^0\in \RR^d,$ $i=k+1, \dots, N,$   $y_i^0\in\RR^d,$  $i=h+1, \dots, M.$   

For well-posedness results for model \eqref{eq1}-\eqref{init},  we refer to classical texts on functional differential equations \cite{Halanay, Hale}. Here, we will focus on
the asymptotic behavior of the solution. In particular, we want to prove the convergence to \emph{consensus}. Let us define the \emph{diameter} of each population as

$$d_X(t):= \max_{i,j=1,...,N} |x_i(t)-x_j(t)|, \quad
d_Y(t):=\max_{i,j=1,...,M} |y_i(t)-y_j(t)|.$$
Moreover, let us define the \emph{global diameter} as
$$ d(t):= \max \Big \{ d_X(t), d_Y(t), \max_{i=1,...,N} \max_{j=1,...,M} |x_i(t)-y_j(t)|  \Big \}. $$

\begin{Definition} \label{def1} We say that the solution $(x_i(t),y_j(t)), i=1,...,N, j=1,...,M,$ to system \eqref{eq1}, with initial condition \eqref{init}, converges to consensus if $$ \lim_{t \rightarrow \infty} d(t)=0$$
\end{Definition}

This kind of model can have applications in social sciences, economics, politics, and ecology. Indeed, it is reasonable to try reaching a global consensus among individuals of different countries,  or different groups of individuals in the same country,  about important questions such as, e.g., ecological behaviors, climate change's reasons, appropriate strategies to reduce $CO_2$ emissions, etc. The proof of a consensus result for model \eqref{eq1} can be considered as a first insight for more quantitative studies aiming to design appropriate control strategies.

The rest of the paper is organized as follows. In section \ref{Prel} we introduce some notations and give preliminary lemmas. In section \ref{cons} we state and prove the consensus result. Finally, in section \ref{num}, we present some numerical tests validating the theoretical results and discuss some possible applications
to ecology and sustainability.

\section{Preliminaries}\label{Prel}
 In this section, we present some preliminary results useful for studying the consensus behavior. 

Firstly, for any fixed a vector $v \in \mathbb{R}^d,$ let us define the following quantities:
\begin{equation} \label{m0}
\begin{split}
m_0:= \min \Big \{  & \min_{i=1,\dots,k} \min_{t \in [-\tau,0]} \langle x_i(t),v \rangle, \min_{i=k+1,\dots,N} \langle x_i(0),v \rangle, \\
& \hspace{1 cm}\min_{i=1,\dots,h} \min_{t \in [-\tau,0]} \langle y_i(t),v \rangle, \min_{i=h+1,\dots,M} \langle y_i(0),v \rangle \Big \},
\end{split}
\end{equation}
and
\begin{equation} \label{M0}
\begin{split}
M_0:= \max \Big \{ & \max_{i=1,\dots,k} \max_{t \in [-\tau,0]} \langle x_i(t),v \rangle, \max_{i=k+1,\dots,N} \langle x_i(0),v \rangle, \\
& \hspace{1 cm} \max_{i=1,\dots,h} \max_{t \in [-\tau,0]} \langle y_i(t),v \rangle, \max_{i=h+1,\dots,M} \langle y_i(0),v \rangle \Big \}.
\end{split}
\end{equation}

\begin{Lemma} \label{2.1}
Let $(x_i(t),y_j(t)), \ i=1\dots,N, \ j=1,\dots,M,$ be a global classical solution of the system \eqref{eq1}-\eqref{init}. Then, for all $v \in \mathbb{R}^d$ we have that 
\begin{equation} \label{1}
m_0 \leq \langle x_i(t),v \rangle \leq M_0, \end{equation}
and 
\begin{equation} \label{2}
 m_0 \leq \langle y_j(t),v \rangle \leq M_0, 
\end{equation}
for all $t \geq -\tau,$ $i=1,\dots,k$, $j=1,\dots,h,$ and for all $t\geq 0,$ $i=k+1,\dots,N,$ $j=h+1,\dots,M.$
\end{Lemma}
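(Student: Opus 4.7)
I will prove only the upper bounds in \eqref{1} and \eqref{2}; the corresponding lower bounds follow by applying the same argument to $-v$, since $M_0$ computed for $-v$ equals $-m_0$. The approach combines a running supremum over all agents and all past times with the Duhamel representation of each scalar equation obtained by taking the inner product of \eqref{eq1} with $v$.

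Fix $t \ge 0$. To avoid case distinctions on the initial interval, extend $x_i(s) := x_i(0)$ for $s \in [-\tau,0)$ when $k+1 \le i \le N$, and similarly $y_j(s) := y_j(0)$ for $s \in [-\tau,0)$ when $h+1 \le j \le M$. Define the running maximum
\[
P(t) := \max\Bigl\{\max_{1 \le i \le N}\ \max_{s \in [-\tau,t]}\langle x_i(s),v\rangle,\ \max_{1 \le j \le M}\ \max_{s \in [-\tau,t]}\langle y_j(s),v\rangle\Bigr\}.
\]
By continuity and the finiteness of the index sets, this maximum is attained at some agent and some time $s^* \in [-\tau, t]$. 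If $s^* \le 0$, then $P(t) \le M_0$ is immediate from \eqref{M0}, so it suffices to consider the case $s^* > 0$.

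Suppose, for concreteness, that the maximum is attained at $x_{i^*}$ with $i^* \le k$ at time $s^* > 0$; the other three cases---$i^* > k$ and the two $y$-cases---are analogous, with fewer or different coupling terms. Taking the inner product of the first line of \eqref{eq1} with $v$ and setting $S_{i^*}(t) := \sum_{j \ne i^*} a_{i^*j}(t) + \sum_{j=1}^h \epsilon_{i^*j}(t)$ gives the scalar linear ODE
\[
\tfrac{d}{dt}\langle x_{i^*}(t),v\rangle + S_{i^*}(t)\langle x_{i^*}(t),v\rangle = \sum_{j\ne i^*}a_{i^*j}(t)\langle x_j(t),v\rangle + \sum_{j=1}^h \epsilon_{i^*j}(t)\langle y_j(t-\tau),v\rangle.
\]
Multiplying by the integrating factor $e^{\int_0^t S_{i^*}}$, integrating from $0$ to $s^*$, and bounding each trajectory value in the integrand by $P(t)$ (legitimate because $s \in [0,s^*] \subset [-\tau,t]$ and $s-\tau \in [-\tau,t]$), I obtain
\[
\langle x_{i^*}(s^*),v\rangle \le e^{-\int_0^{s^*}\! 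S_{i^*}}\,\langle x_{i^*}(0),v\rangle + \Bigl(1 - e^{-\int_0^{s^*}\! S_{i^*}}\Bigr) P(t),
\]
using the elementary identity $\int_0^{s^*}\! e^{-\int_s^{s^*} S_{i^*}(u)\,du}\,S_{i^*}(s)\,ds = 1 - e^{-\int_0^{s^*}\! S_{i^*}}$. Since the left-hand side equals $P(t)$ and $\langle x_{i^*}(0),v\rangle \le M_0$, rearranging yields $P(t) \le M_0$, which is the desired bound.

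The main point requiring care is the bookkeeping for the running supremum: one must ensure that \emph{every} trajectory value appearing in the Duhamel integrand---including the delayed term $\langle y_j(s-\tau),v\rangle$ when $s - \tau \in [-\tau, 0]$---is dominated by $P(t)$, which is why $P(t)$ is defined to include times in $[-\tau, 0]$. The three remaining cases reduce in exactly the same way, with $S_{i^*}$ replaced by the appropriate sum of coefficients of the unknown on the right-hand side of its ODE; in each case the coefficient sum coincides with the integrating-factor exponent, so the identity above applies verbatim.
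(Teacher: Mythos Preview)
Your proof is correct, but it follows a genuinely different route from the paper's. The paper argues by contradiction with an $\epsilon$-perturbation: it defines $T^\epsilon$ as the set of times where every scalar $\langle x_i(s),v\rangle$, $\langle y_j(s),v\rangle$ stays below $M_0+\epsilon$, assumes $S^\epsilon := \sup T^\epsilon < +\infty$, and then applies the crude differential inequality $\frac{d}{dt}\langle x_i,v\rangle \le \Lambda(M_0+\epsilon - \langle x_i,v\rangle)$ together with Gr\"onwall to show every scalar is in fact at most $M_0+\epsilon - \epsilon e^{-\Lambda S^\epsilon}$ on $[0,S^\epsilon)$, contradicting the definition of $S^\epsilon$; letting $\epsilon\downarrow 0$ finishes. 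Your argument is more direct: you work with the \emph{exact} variation-of-constants (Duhamel) representation using the time-dependent integrating factor $S_{i^*}$ rather than the constant $\Lambda$, and exploit the algebraic identity $\int_0^{s^*} e^{-\int_s^{s^*} S_{i^*}} S_{i^*}(s)\,ds = 1 - e^{-\int_0^{s^*} S_{i^*}}$ so that the coefficient of $P(t)$ on the right is exactly $1-e^{-\int_0^{s^*} S_{i^*}}$, allowing cancellation and yielding $P(t)\le M_0$ without any $\epsilon$ or contradiction. Your approach is shorter and avoids the limiting step, but it relies on the precise match between the coefficient sum $S_{i^*}$ and the weights in the forcing term; the paper's Gr\"onwall approach is slightly cruder but more routine and would adapt verbatim to, say, perturbed right-hand sides where that match fails.
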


\begin{proof} Fix a vector  $v \in \mathbb{R}^d$  and let $m_0, M_0$ be the constants defined in \eqref{m0}, \eqref{M0}. 
By definition of $m_0$ and $M_0$, we have that the inequalities \eqref{1} and \eqref{2} are trivially satisfied for $t \in [-\tau,0],$  $i=1,\dots,k$ and $j=1,\dots,h.$  Then, we want to prove \eqref{1} and \eqref{2} for $t\ge 0.$
Let us prove \eqref{1}; \eqref{2} follows analougously. 

For a fixed parameter   $\epsilon >0,$ let us define the following set:
$$
\begin{array}{l}
\displaystyle{
 T^{\epsilon}:= \left \{ t>0 \ :\  \langle x_i(s),v \rangle < M_0+\epsilon, \ \forall i=1,\dots, N,\right.}
\\
\displaystyle{\left.
\hspace{2 cm}\langle y_i(s),v \rangle < M_0+\epsilon, \ \forall i=1,\dots, M, \ \forall s \in [0,t) \right \}}.
\end{array} $$
By continuity, $T^{\epsilon} \neq \emptyset.$ Let us call $S^{\epsilon}:= \sup T^{\epsilon}.$ We want to prove that $S^{\epsilon}=+\infty.$ Let us suppose by contraddiction that $S^{\epsilon}<+\infty.$ Then,
\begin{equation}
\max_{i=1,\dots,N} \langle x_i(t), v \rangle < M_0+\epsilon, \ \forall t \in [0,S^{\epsilon}),
\end{equation}
and 
\begin{equation}
\lim_{t \rightarrow S^{\epsilon -}} \max_{i=1,\dots,N} \langle x_i(t), v \rangle = M_0+\epsilon.
\end{equation}
For all $t \in [0,S^{\epsilon})$ and $i=k+1,\dots,N$ we have 
\begin{equation}
\begin{split}
\frac{d}{d t} \langle x_i(t),v \rangle & = \sum_{j \neq i} a_{ij}(t) \langle x_j(t)-x_i(t), v \rangle \\
& \leq \sum_{j \neq i} a_{ij}(t) (M_0+\epsilon-\langle x_i(t),v \rangle) \\
& \leq \Lambda (M_0+\epsilon-\langle x_i(t),v \rangle).
\end{split}
\end{equation}
Applying the Grönwall's Lemma over $t \in [0,S^{\epsilon})$, we find
\begin{equation}
\begin{split}
\langle x_i(t),v \rangle & \leq e^{-\Lambda t} \langle x_i(0),v \rangle +(M_0+\epsilon)(1-e^{-\Lambda t}) \\
& \leq M_0+\epsilon -\epsilon e^{-\Lambda t} \leq M_0+\epsilon -\epsilon e^{-\Lambda S^{\epsilon}},
\end{split}
\end{equation}
for all $t \in [0,S^{\epsilon})$ and $i=k+1,\dots,N.$
Therefore, we deduce that 
$$ \max_{i=k+1,\dots,N} \langle x_i(t),v \rangle < M_0+\epsilon -\epsilon e^{-\Lambda S^{\epsilon}}, \ \forall t \in (0,S^{\epsilon}). $$
Consider now $t \in [0,S^{\epsilon})$ and $i=1,\dots,k.$ 
Then, we have 
\begin{equation}
\begin{array}{l}
\displaystyle{\frac{d}{d t} \langle x_i(t),v \rangle  = \sum_{j \neq i} a_{ij}(t)\langle x_j(t)-x_i(t),v \rangle + \sum_{j=1}^{h} \epsilon_{ij}(t) \langle y_j(t-\tau)-x_i(t),v \rangle} \\
\displaystyle{\hspace{2cm}\leq \sum_{j \neq i} a_{ij}(t) (M_0+\epsilon-\langle x_i(t),v \rangle) + \sum_{j=1}^{h} \epsilon_{ij}(t)(M_0+\epsilon-\langle x_i(t),v \rangle)} \\
 \displaystyle{\hspace{2cm}\leq \Lambda (M_0+\epsilon-\langle x_i(t),v \rangle).}
\end{array}
\end{equation}
Therefore, we can find analogously that 
$$ \max_{i=1,\dots,k} \langle x_i(t),v \rangle < M_0+\epsilon -\epsilon e^{-\Lambda S^{\epsilon}}, \ \forall t \in [0,S^{\epsilon}). $$
Then, we have that
$$ \max_{i=1,\dots,N} \langle x_i(t),v \rangle < M_0+\epsilon -\epsilon e^{-\Lambda S^{\epsilon}}, \ \forall t \in [0,S^{\epsilon}). $$ 
Passing to the limit for $t \rightarrow S^{\epsilon -}$, we find 
$$ \lim_{t \rightarrow S^{\epsilon -}} \max_{i=1,\dots,N} \langle x_i(t),v \rangle \leq  M_0+\epsilon -\epsilon e^{-\Lambda S^{\epsilon}} < M_0 + \epsilon, $$
and this gives a contradiction. Then, we have that $S^{\epsilon}=+\infty.$
Hence, by arbitrarity of $\epsilon$, we have that 
$$ \max_{i=1,\dots,N} \langle x_i(t),v \rangle \leq M_0, \ \forall t \geq 0, \ v \in \mathbb{R}^d.$$
Therefore, we have that 
$$\langle x_i(t),v \rangle \leq M_0, \ \forall \ t \geq -\tau, \ i=1,\dots,k, $$
and 
$$\langle x_i(t),v \rangle \leq M_0, \ \forall t \geq 0, \ i=k+1,\dots,N. $$
In order to prove the other inequality we observe that, by the proven estimate,
\begin{equation}
\begin{split}
& - \langle x_i(t),v \rangle  = \langle x_i(t), -v \rangle\hspace{2 cm} \\
& \hspace{2 cm}\leq \max \Big \{ \max_{i=1,\dots,k} \max_{t \in [-\tau,0]} \langle x_i(t),-v \rangle, \max_{i=k+1,\dots,N} \langle x_i(0),-v \rangle, \\
&\hspace{3 cm} \ \ \ \ \ \ \ \ \ \ \max_{i=1,\dots,h} \max_{t \in [-\tau,0]} \langle y_i(t),-v \rangle, \max_{i=h+1,\dots,M} \langle y_i(0),-v \rangle \Big \} \\
& \hspace{2 cm}= - \min \Big \{ \min_{i=1,\dots,k} \min_{t \in [-\tau,0]} \langle x_i(t),v \rangle, \min_{i=k+1,\dots,N} \langle x_i(0),v \rangle, \\
& \hspace{3 cm}\ \ \ \ \ \ \ \ \ \ \min_{i=1,\dots,h} \min_{t \in [-\tau,0]} \langle y_i(t),v \rangle, \min_{i=h+1,\dots,M} \langle y_i(0),v \rangle \Big \}= -m_0.
\end{split}
\end{equation}
This concludes the proof.
\end{proof}
The above lemma allows us to deduce a bound on the states.
\begin{Lemma} \label{2.4}
Let $(x_i(t),y_j(t)), \ i=1\dots,N, \ j=1,\dots,M,$ be a global classical solution of the system \eqref{eq1}-\eqref{init}. Then, 
\begin{equation}\label{eq16}
|x_i(t)| \leq C_0, \quad |y_j(t)| \leq C_0,
\end{equation}
 $\forall \, t\ge -\tau,$ for   $i=1, \dots, k,$ $j=1, \dots, h,$ and $\forall \, t\ge 0,$ for $i=k+1, \dots, N,$ $j=h+1, \dots, M,$
where $C_0$ is given by
$$ C_0:= \max \Big \{ \max_{i=1,\dots,k} \max_{t \in [-\tau,0]} |x_i(t)|, \max_{i=k+1,\dots,N} |x_i(0)|, \max_{j=1,\dots,h} \max_{t \in [-\tau,0]} |y_j(s)|, \max_{j=h+1,\dots,M} |y_j(0)| \Big \}. $$
\end{Lemma}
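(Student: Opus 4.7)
The plan is to deduce Lemma 2.4 directly from Lemma 2.1 by optimizing over unit vectors $v \in \RR^d$. The identity $|w| = \sup_{|v|=1} \langle w,v\rangle$, valid for any $w \in \RR^d$, together with the Cauchy--Schwarz inequality, reduces the required norm bound to the scalar bound already established.

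First I would fix $t$ and $i \in \{1,\dots,N\}$ (the argument for $y_j(t)$ is identical). If $x_i(t)=0$, then $|x_i(t)|=0\le C_0$ and there is nothing to prove. Otherwise I set
\[
v := \frac{x_i(t)}{|x_i(t)|},
\]
so that $|v|=1$ and $\langle x_i(t),v\rangle = |x_i(t)|$. Applying Lemma~\ref{2.1} with this particular $v$ yields $\langle x_i(t),v\rangle \le M_0(v)$, where $M_0(v)$ is computed via \eqref{M0} for that same $v$.

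Next I would bound $M_0(v)$ by $C_0$. Since $|v|=1$, the Cauchy--Schwarz inequality gives $\langle x_i^0(s),v\rangle \le |x_i^0(s)|$ for every $s \in [-\tau,0]$ and each $i=1,\dots,k$, and analogously for $\langle x_i^0,v\rangle$ when $i=k+1,\dots,N$ and for the $y$-components. Taking the maximum of all these quantities, each of which is bounded by the corresponding initial norm and hence by $C_0$, yields $M_0(v)\le C_0$. Combining with the previous step gives $|x_i(t)| = \langle x_i(t),v\rangle \le M_0(v) \le C_0$. The same argument with $v = y_j(t)/|y_j(t)|$ delivers $|y_j(t)|\le C_0$.

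There is no real obstacle here; the only point requiring a small amount of care is that the vector $v$ in Lemma~\ref{2.1} may be chosen freely in $\RR^d$, so one is allowed to select it depending on the specific state $x_i(t)$ whose norm one wishes to control (the lemma is applied pointwise in $t$ and $i$, not uniformly). Once this is noted, the proof amounts to the two-line combination of Lemma~\ref{2.1} with Cauchy--Schwarz.
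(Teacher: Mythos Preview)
Your proof is correct and follows essentially the same approach as the paper: choose $v = x_i(t)/|x_i(t)|$ (when nonzero), apply Lemma~\ref{2.1} to bound $\langle x_i(t),v\rangle$ by $M_0$, and then use Cauchy--Schwarz on the initial data to bound $M_0$ by $C_0$. The paper's proof is virtually identical, including the explicit treatment of the case $|x_i(t)|=0$.
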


\begin{proof} We prove the first inequality of \eqref{eq16}. The second one for $|y_j(t)|, \ j=1,\dots,M,$ follows analogously. For $i=1,\dots,N$ and $t \geq -\tau,$ if $|x_i(t)|=0,$ the result is trivial. Let us suppose $|x_i(t)|>0$ and define the vector
    $$ v:=\frac{x_i(t)}{|x_i(t)|} \ .$$
Then, applying \eqref{1} and the Cauchy-Schwarz inequality, we get
\begin{equation} \begin{split}
& |x_i(t)|= \langle x_i(t),v \rangle \leq M_0 \\
& \hspace{1,2 cm}= \max \Big \{ \max_{i=1,\dots,k} \max_{t \in [-\tau,0]} \langle x_i(t),v \rangle, \max_{i=k+1,\dots,N} \langle x_i(0),v \rangle, \max_{i=1,\dots,h} \max_{t \in [-\tau,0]} \langle y_i(t),v \rangle, \\
&  \hspace{3,2 cm}\hspace{3 cm} \max_{i=h+1,\dots,M} \langle y_i(0),v \rangle \Big \} \\
&  \hspace{1,2 cm}\leq  \max \Big \{ \max_{i=1,\dots,k} \max_{t \in [-\tau,0]} |x_i(t)| |v|, \max_{i=k+1,\dots,N} |x_i(0)||v|, \max_{j=1,\dots,h} \max_{t \in [-\tau,0]} |y_j(s)||v|, \\
& \hspace{3,2 cm} \hspace{3 cm} \max_{j=h+1,\dots,M} |y_j(0)||v| \Big \} =C_0,
\end{split}
\end{equation}
being v a unit vector.  So, the first inequality of \eqref{eq16} is proven.
\end{proof}
\begin{Remark} \label{2.5}
    From Lemma \ref{2.4}, since the influence functions $\psi$ and $\psi^*$ are continuous, we deduce that
    \begin{equation}
        \begin{split}
            \psi(x_i(t),x_j(t)) \geq \psi_0:= \min_{|z_1|,|z_2| \leq C_0} \psi(z_1,z_2) >0  , \\
            \psi^*(y_l(t),y_r(t)) \geq \psi^*_0:= \min_{|z_1|,|z_2| \leq C_0} \psi^*(z_1,z_2) >0  ,
        \end{split}
    \end{equation}
    for each $t \geq 0 $, $i,j=1,...,N$ and $l,r=1,...,M$. Moreover, since the functions $\phi$ and $\phi^*$ are continuous too, again we deduce that
    \begin{equation}
        \begin{split}
            \phi(x_i(t),y_j(t-\tau)) \geq \phi_0:= \min_{|z_1|,|z_2| \leq C_0} \phi(z_1,z_2) >0 , \\
            \phi^*(y_l(t),x_r(t-\tau)) \geq \phi^*_0:= \min_{|z_1|,|z_2| \leq C_0} \phi^*(z_1,z_2) >0 ,
        \end{split}
    \end{equation}
    for each $t \geq 0 $, $i,r=1,...,k$ and $j,l=1,...,h$.
\end{Remark}

From Remark \ref{2.5}, we can define the positive constant
\begin{equation}\label{Gamma} 
 \Gamma := \min \Big \{ \psi_0, \psi_0^*, \phi_0, \phi_0^* \Big \}\,.
\end{equation}

Now, fix $v \in \mathbb{R}^d$ and let $m_0,$ $M_0$ be as in \eqref{m0} and \eqref{M0} respectively. Since, up to changes of influence function,  system \eqref{eq1} is invariant by translation, without loss of generality, we may assume

$$0<m_0\le M_0.$$

Inspired by \cite{3}, we can prove the following lemma.
\begin{Lemma} \label{case1}
Let $(x_i(t),y_j(t)),$ with $i=1,\dots,N$ and $j=1,\dots,M,$ be a global classical solution of the system \eqref{eq1}-\eqref{init}. Then, for $t \in [5\tau,6\tau],$ we have 
\begin{equation} \label{x}
 m_0+\frac{\Gamma_1}{2}(M_0-m_0) \leq \langle x_i(t),v \rangle \leq M_0-\frac{\Gamma_1}{2}(M_0-m_0), 
 \end{equation}
 and 
 \begin{equation}\label{y}
 m_0+\frac{\Gamma_1}{2}(M_0-m_0) \leq \langle y_j(t),v \rangle \leq M_0-\frac{\Gamma_1}{2}(M_0-m_0),
 \end{equation}
for a suitable constant $\Gamma_1\in (0,1).$
\end{Lemma}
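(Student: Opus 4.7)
My plan is to show that each of the bounds $M_0$ and $m_0$ on the projections $\langle\cdot,v\rangle$ is strictly improved by a fixed fraction of $D:=M_0-m_0$ throughout $[5\tau,6\tau]$. The starting observation is that, by the definitions \eqref{m0}--\eqref{M0} and compactness of $[-\tau,0]$, both extremal values are attained: there exist an agent $z^-$ and a time $t^-\in[-\tau,0]$ with $\langle z^-(t^-),v\rangle=m_0$, and analogously a pair $(z^+,t^+)$ realising $M_0$. The four sub-cases according to whether each extremal agent belongs to the $x$- or to the $y$-population are treated symmetrically; I describe only the chain of estimates producing the upper bound of \eqref{x}--\eqref{y} starting from a low agent $z^-$, the dual argument starting from $z^+$ giving the matching lower bound.

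The first step controls $z^-$ itself. Using Lemma \ref{2.1} to bound every other projection appearing on the right-hand side of the ODE for $\langle z^-(t),v\rangle$ by $M_0$, I obtain $\tfrac{d}{dt}\langle z^-(t),v\rangle\le\Lambda(M_0-\langle z^-(t),v\rangle)$. Integrating from $t^-$ yields
\[
\langle z^-(t),v\rangle \le M_0 - D\,e^{-\Lambda(t-t^-)} \le M_0 - D\,e^{-7\Lambda\tau}, \qquad t\in[t^-,6\tau],
\]
so $z^-$ stays at distance at least $De^{-7\Lambda\tau}$ from $M_0$ throughout the window.

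This ``lowness'' is then propagated to every other agent in at most three further Grönwall steps, each consuming one unit of $\tau$. For any other agent $w$ in the same population as $z^-$, the equation for $\langle w,v\rangle$ contains a direct coupling term with $z^-$ of weight at least $\Gamma/(N+h-1)$ by Remark \ref{2.5}; isolating this term, bounding every other term by $\langle\cdot,v\rangle\le M_0$, and integrating produces $\langle w(t),v\rangle\le M_0-c_1 D$ uniformly on $[\tau,6\tau]$ for an explicit $c_1>0$ depending only on $\Lambda,\Gamma,\tau,N,h$. The same mechanism, applied now to a delayed cross-coupling term $\epsilon_{ij}(t)(y_j(t-\tau)-x_i(t))$ or $\eta_{ji}(t)(x_i(t-\tau)-y_j(t))$, of weight at least $\Gamma/(N+h-1)$ or $\Gamma/(M+k-1)$, transfers the bound to the directly-coupled agents of the opposite population on $[3\tau,6\tau]$, and a final intra-population step using the weights $a_{ij}\ge\Gamma/(N-1)$ or $b_{ij}\ge\Gamma/(M-1)$ reaches the remaining agents, yielding $\langle x_i(t),v\rangle,\langle y_j(t),v\rangle\le M_0-c_3 D$ on $[4\tau,6\tau]$ for every $i,j$, with an explicit $c_3>0$. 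The symmetric chain starting from $z^+$ provides $\langle x_i(t),v\rangle,\langle y_j(t),v\rangle\ge m_0+c_3' D$ on the same interval, and setting $\Gamma_1:=2\min\{c_3,c_3'\}$ (shrunk below $1$ if necessary) gives \eqref{x}--\eqref{y}.

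I expect the main obstacle to be the bookkeeping of constants and time intervals: one must keep track of three interleaved Grönwall inequalities in both the ``low'' and the ``high'' chain, and check that the improvement reaches every agent before the window $[5\tau,6\tau]$. This is precisely why six delay lengths are needed --- one $\tau$ of buffer in $[-\tau,0]$ plus one $\tau$ for each of the three propagation steps, leaving a final $\tau$ of slack in which the resulting uniform bound is asserted. A minor subtlety in the first step is that if $z^-$ is itself a delayed agent, its equation also contains a delayed coupling to the opposite population; since Lemma \ref{2.1} bounds those terms by $M_0$ as well, the constant $\Lambda$ on the right-hand side of the differential inequality is unchanged.
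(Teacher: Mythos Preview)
Your overall scheme---propagate a strict gap from the extremal agent to all others via a chain of Gr\"onwall steps---is the same as the paper's, but your first step has a genuine gap. You write ``integrating from $t^-$'', where $t^-\in[-\tau,0]$ is the time at which $m_0$ is attained. If the extremal agent $z^-$ is a \emph{leader} (i.e.\ $z^-=x_L$ with $L\le k$, or $z^-=y_L$ with $L\le h$) and $t^-<0$, the ODE for $z^-$ is \emph{not} in force on $[t^-,0]$: on that interval $z^-$ is prescribed initial data, which is an arbitrary continuous function. In particular nothing prevents $\langle z^-(0),v\rangle=M_0$, and then your inequality $\langle z^-(t),v\rangle\le M_0-De^{-7\Lambda\tau}$ for $t\ge0$ is simply false. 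The ``minor subtlety'' you flag (that $z^-$'s equation may contain delayed terms) is not the issue; the issue is that there is no equation at all on $[t^-,0)$.

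The paper handles this by a case split you have suppressed. When $z^-$ is a non-leader (so $t^-=0$), it controls $z^-$ on a short interval $[0,\sigma]$ via the derivative bound $|\langle\dot z^-,v\rangle|\le 2\Lambda M_0$ and then propagates within the same population first, as you do. But when $z^-$ is a leader with $t^-\in[-\tau,0)$, the paper \emph{reorders} the chain: by continuity of the initial data one has $\langle z^-(s),v\rangle\le\tfrac{M_0+m_0}{2}$ on a small interval $[\alpha,\beta]\subset[-\tau,0]$, and this is seen by the opposite population's leaders through the \emph{delayed} coupling at times $[\alpha+\tau,\beta+\tau]\subset[0,\tau]$. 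So the first hop goes across populations via the delay, not within the same population. Your ``same population $\to$ cross $\to$ other population'' ordering must be swapped in this case, and you need the continuity-interval device (or an equivalent) to get a usable time window inside $[-\tau,0]$. Note also that the length of that window can be forced arbitrarily small by the initial data, which is why the paper's constant $\Gamma_1$ carries the factor $(1-e^{-\Lambda\sigma})$; the $D$-independent constant your approach would produce is not available in general.
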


\begin{proof}  Let us proceed by steps.

\emph{Step 1:} Suppose that exists $L \in \{k+1,\dots,N\}$ such that $\langle x_L(0),v \rangle = m_0.$ Then, since $|\langle \dot{x}_i(t),v \rangle|\leq 2 \Lambda M_0, \ \forall t \geq 0,$ we have that 
$$ m_0 \leq \langle x_L(t), v \rangle \leq \frac{M_0+m_0}{2}, \ t \in [0,\sigma],$$
where $\sigma$ is a positive number such that
\begin{equation}\label{sigma}
\sigma \le\min \Big \{ \tau, \frac{M_0-m_0}{4 \Lambda M_0} \Big \}. 
\end{equation}
Consider $i \in \{k+1,\dots,N \} \setminus \{L\}$ and $t \in [0,\sigma].$ Then,
\begin{equation}
\begin{split}
\frac{d}{d t} \langle x_i(t),v \rangle & = \sum_{\substack{j \neq i \\ j \neq L}} a_{ij}(t) \langle x_j(t)-x_i(t),v \rangle + a_{iL}(t) \langle x_L(t)-x_i(t),v \rangle \\
& \leq \sum_{\substack{j \neq i \\ j \neq L}} a_{ij}(t) (M_0 - \langle x_i(t),v \rangle )+ a_{iL}(t)\Big(\frac{M_0+m_0}{2}- \langle x_i(t),v \rangle\Big) \\
& = \Big( \sum_{j \neq i} a_{ij}(t)-a_{iL}(t) \Big) (M_0 - \langle x_i(t),v \rangle )+ a_{iL}(t)\Big(\frac{M_0+m_0}{2}- \langle x_i(t),v \rangle\Big) \\
& \leq (\Lambda-a_{iL}(t)) (M_0 - \langle x_i(t),v \rangle)+ a_{iL}(t) \Big ( \frac{M_0+m_0}{2}- \langle x_i(t),v \rangle \Big) \\
& = \Lambda ( M_0 - \langle x_i(t),v \rangle )- a_{iL}(t)\frac{M_0-m_0}{2} \\
& \leq \Big( \Lambda M_0 -\frac{\Gamma}{N-1}\frac{M_0-m_0}{2} \Big) - \Lambda \langle x_i(t),v \rangle.
\end{split}
\end{equation}
Integrating over $[0, t]$ with $t \in [0,\sigma],$ we find 
$$ \langle x_i(t),v \rangle \leq e^{-\Lambda t} \langle x_i(0),v \rangle + \Big(M_0-\frac{1}{N-1}\frac{\Gamma}{\Lambda}\frac{M_0-m_0}{2}\Big) (1-e^{-\Lambda t}).$$
Taking $t=\sigma$ in the above inequality, we can find 
$$ \langle x_i(\sigma),v \rangle \leq M_0 - (1-e^{-\Lambda \sigma})\frac{1}{N-1} \frac{\Gamma}{\Lambda} \frac{M_0-m_0}{2}. $$
Denoting 
\begin{equation}\label{delta_minus1} \delta_{-}^1:=(1-e^{-\Lambda \sigma})\frac{1}{2(N-1)}\frac{\Gamma}{\Lambda} \Big(1-\frac{m_0}{M_0}\Big), \end{equation}
we have the inequality
\begin{equation}
\langle x_i(\sigma),v \rangle \leq (1-\delta_{-}^1)M_0, \ \forall i \in \{k+1,\dots,N\} \setminus \{L \}.
\end{equation}
Consider now $t \in [\sigma, 6\tau]$.
\begin{equation}\label{s31}
\begin{split}
\frac{d}{d t} \langle x_i(t),v \rangle & = \sum_{j \neq i} a_{ij}(t)\langle x_j(t)-x_i(t),v \rangle  \\
& \leq \Lambda(M_0-\langle x_i(t),v \rangle), \ \forall i \in \{k+1,\dots,N\} \setminus \{L\}.
\end{split}
\end{equation}
Integrating over $[\sigma, t]$ with $t \in [\sigma, 6\tau],$ we have
\begin{equation*}
\begin{split}
\langle x_i(t),v \rangle & \leq e^{-\Lambda(t-\sigma)} \langle x_i(\sigma),v \rangle+ M_0(1-e^{-\Lambda (t-\sigma)}) \\
& \leq e^{-\Lambda(t-\sigma)}(1-\delta_{-}^1)M_0 + M_0(1-e^{-\Lambda (t-\sigma)}) \\
& = M_0 \left (1-\delta_{-}^1 e^{-\Lambda (t-\sigma)}\right ) \\
& \leq M_0 \left (1-\delta_{-}^1 e^{-6\Lambda \tau}\right ), \ 
\end{split}
\end{equation*}
Then, 
\begin{equation}
\langle x_i(t),v \rangle \leq M_0 (1-\delta_{-}^1 e^{-6\Lambda \tau}), \ \forall i \in \{k+1,\dots,N\} \setminus \{L \}, \ \forall t \in [\sigma, 6\tau].
\end{equation}
Consider now $i \in \{ 1, \dots, k \}$ and fix $i_1 \in \{k+1,\dots, N\}\setminus \{L\}$. Taking $t \in [\sigma,6\tau],$ we have 
\begin{equation}
\begin{split}
\frac{d}{d t} \langle x_{i}(t),v \rangle & = \sum_{\substack{j \neq i \\ j \neq i_1}} a_{i j}(t) \langle x_j(t)-x_{i}(t),v \rangle + \sum_{j=1}^{h} \epsilon_{i j}(t) \langle y_j(t-\tau)-x_{i}(t),v \rangle \\
& + a_{i i_1}(t) \langle x_{i_1}(t)-x_{i}(t),v \rangle \\
& \leq \Big( \sum_{j \neq i} a_{ij}(t)-a_{ii_1}(t) \Big)  (M_0-\langle x_{i}(t),v \rangle)+ \frac{h}{N+h-1}\Lambda (M_0-\langle x_{i}(t),v \rangle) \\
& + a_{ii_1}(t)(M_0(1-\delta_{-}^1 e^{- 6\tau\Lambda})-\langle x_{i}(t),v \rangle) \\
& \leq \frac{N-1}{N+h-1} \Lambda (M_0-\langle x_{i}(t),v \rangle)+ \frac{h}{N+h-1}\Lambda (M_0-\langle x_{i}(t),v \rangle) \\
& - \frac{1}{N+h-1}\Gamma M_0 \delta_{-}^1 e^{- 6\tau\Lambda}\\
& = \Lambda M_0 \Big(1-\frac{1}{N+h-1}\frac{\Gamma}{\Lambda}\delta_{-}^1 e ^{- 6\tau\Lambda}\Big)-\Lambda \langle x_{i}(t),v \rangle.
\end{split}
\end{equation}
Integrating over $[\sigma, t]$ with $t \in [\sigma,6\tau]$ we find 
\begin{equation}
\begin{split}
\langle x_{i}(t),v \rangle & \leq e^{-\Lambda(t-\sigma)} \langle x_{i}(\sigma),v \rangle + M_0 \Big(1-\frac{1}{N+h-1}\frac{\Gamma}{\Lambda}\delta_{-}^1 e ^{- 6\tau\Lambda}\Big)(1-e^{-\Lambda(t-\sigma)}) \\
& \leq M_0 \Big(1-\frac{1}{N+h-1}\frac{\Gamma}{\Lambda}\delta_{-}^1 e ^{- 6\tau\Lambda}(1-e^{-\Lambda(t-\sigma)}) \Big).
\end{split}
\end{equation}
Shrinking to $t \in [2\tau,6\tau]$ we find that 
\begin{equation} \label{C}
\langle x_{i}(t),v \rangle \leq M_0 \Big(1-\frac{1}{N+h-1}\frac{\Gamma}{\Lambda}\delta_{-}^1 e ^{- 6\tau\Lambda}(1-e^{-\Lambda \tau}) \Big), \ \forall i \in \{1,\dots,k\}.
\end{equation}
Using the state $i_1 \in \{k+1, \dots, N\}\setminus \{L\},$ we can find an upper bound for $i_L$ too. Indeed, for $t \in [\sigma,6\tau],$ analogously to \eqref{s31}, we obtain
\begin{equation}
\begin{split}
& \frac{d}{d t} \langle x_L(t),v \rangle = \sum_{\substack{j \neq L \\ j \neq i_1}} a_{Lj} \langle x_j(t)-x_L(t),v \rangle + a_{Li_1} \langle x_{i_1}(t)-x_L(t),v \rangle \\
& \leq \Lambda M_0 \Big( 1-\frac{1}{N-1}\frac{\Gamma}{\Lambda}\delta_{-}^1e^{-6\Lambda\tau} \Big)-\Lambda \langle x_L(t),v \rangle.
\end{split}
\end{equation}
Integrating over $[\sigma, t]$ with $t \in [\sigma, 6\tau],$ we find
\begin{equation}
\langle x_L(t),v \rangle \leq e^{-\Lambda(t-\sigma)}\langle x_L(\sigma),v \rangle + M_0\Big(1-\frac{1}{N-1}\frac{\Gamma}{\Lambda}\delta_{-}^1e^{-6\Lambda\tau}\Big)(1-e^{-\Lambda(t-\sigma)}).
\end{equation}
Shrinking to $t \in [2\tau,6\tau],$ we have the estimate
\begin{equation} \label{Cbis}
\langle x_L(t),v \rangle \leq M_0 \Big( 1- \frac{1}{N-1}\frac{\Gamma}{\Lambda}\delta_{-}^1e^{-6\Lambda\tau}(1-e^{-\Lambda\tau})\Big).
\end{equation}
Consider now $i \in \{1,\dots,h\}$ and fix $i_2 \in \{1,\dots,k\}.$ Taking $t \in [3\tau,6\tau],$ we have 
\begin{equation}
\begin{split}
\frac{d}{d t} \langle y_{i}(t), &v \rangle  = \sum_{j \neq i} b_{i j}(t) \langle y_j(t)-y_{i}(t),v \rangle + \sum_{\substack{j=1 \\ j \neq i_2}}^{k} \eta_{ij}(t) \langle x_j(t-\tau)-y_{i}(t),v \rangle\\
&\hspace{1,2 cm} + \eta_{i i_2}(t)\langle x_{i_2}(t-\tau)-y_{i}(t),v \rangle \\
& \leq \frac{M-1}{M+k-1} \Lambda (M_0-\langle y_{i}(t),v \rangle)+ \Big(\sum_{j=1}^{k}\eta_{ij}(t)-\eta_{ii_2}(t)\Big)(M_0-\langle y_{i}(t),v \rangle)\\
&\hspace{1,2 cm} +\eta_{ii_2}(t) \Big[ M_0 \Big( 1-\frac{1}{N+h-1}\frac{\Gamma}{\Lambda}\delta_{-}^1e^{-6\tau\Lambda }(1-e^{-\Lambda \tau}) \Big)-\langle y_{i}(t),v \rangle \Big] \\
& \leq \Lambda M_0 \Big( 1-\frac{1}{(N+h-1)(M+k-1)}\Big(\frac{\Gamma}{\Lambda}\Big)^2\delta_{-}^1e^{- 6\tau\Lambda}(1-e^{-\Lambda \tau}) \Big) -\Lambda \langle y_{i}(t),v \rangle.
\end{split}
\end{equation}
Integrating over $[3\tau, t],$ with $t \in [3\tau,6\tau],$ we find that 
\begin{equation}
\begin{split}
\langle y_{i}(t),v \rangle & \leq e^{-\Lambda(t-3\tau)}\langle y_{i}(3\tau),v \rangle \\
& + M_0 \Big( 1- \frac{1}{(N+h-1)(M+k-1)}\Big(\frac{\Gamma}{\Lambda}\Big)^2\delta_{-}^1 e^{- 6\tau\Lambda}(1-e^{-\Lambda \tau}) \Big) (1-e^{-\Lambda(t-3\tau)}) \\
& \leq M_0 \Big( 1- \frac{1}{(N+h-1)(M+k-1)}\Big(\frac{\Gamma}{\Lambda}\Big)^2\delta_{-}^1 e^{- 6\tau\Lambda}(1-e^{-\Lambda \tau})(1-e^{-\Lambda(t-3\tau)}) \Big). 
\end{split}
\end{equation}
Shrinking to $t \in [4\tau,6\tau]$ we find that 
\begin{equation} \label{A}
\langle y_{i}(t),v \rangle \leq M_0 \Big( 1- \frac{1}{(N+h-1)(M+k-1)}\Big(\frac{\Gamma}{\Lambda}\Big)^2\delta_{-}^1 e^{- 6\tau\Lambda}(1-e^{-\Lambda \tau})^2 \Big), \ \forall i \in \{1,\dots,h\}.
\end{equation}
Finally, consider $i \in \{h+1,\dots,M\}$ and fix $j_1 \in \{1,\dots,h\}.$ Taking $t \in [4\tau,6\tau],$ with analogous computation, we find
\begin{equation}
\begin{split}
& \frac{d}{d t} \langle y_{i}(t),v \rangle = \sum_{\substack{j \neq i \\ j \neq j_1}} b_{i j}(t) \langle y_j(t)-y_{i}(t),v \rangle + b_{i j_1}(t) \langle y_{j_1}(t)-y_{i}(t),v \rangle \\
& \leq \Lambda M_0 \Big( 1-\frac{1}{(M-1)(N+h-1)(M+k-1)}\Big(\frac{\Gamma}{\Lambda}\Big)^3\delta_{-}^1e^{-6\tau\Lambda}(1-e^{-\Lambda\tau})^2 \Big)-\Lambda \langle y_{i}(t),v \rangle.
\end{split}
\end{equation}
Integrating over $ [4\tau, t]$ with $t \in [4\tau,6\tau],$ we have that
\begin{equation*}
\begin{split}
& \langle y_{i}(t),v \rangle \leq e^{-\Lambda(t-4\tau)}\langle y_{i}(4\tau),v\rangle  \\
& + M_0 \Big[ 1- \frac{1}{(M-1)(N+h-1)(M+k-1)}\Big(\frac{\Gamma}{\Lambda}\Big)^3\delta_{-}^1e^{-6\Lambda\tau}(1-e^{-\Lambda\tau})^2\Big] (1-e^{-\Lambda(t-4\tau)}) \\
& \leq M_0 \Big[ 1- \frac{1}{(M-1)(N+h-1)(M+k-1)}\Big(\frac{\Gamma}{\Lambda}\Big)^3\delta_{-}^1e^{-6\Lambda\tau}(1-e^{-\Lambda\tau})^2(1-e^{-\Lambda(t-4\tau)})\Big]. 
\end{split}
\end{equation*}
Shrinking to $t \in [5\tau,6\tau],$ we find that 
\begin{equation} \label{F}
\begin{split}
\langle y_{i}(t),v \rangle \leq M_0 \Big[ 1- \frac{1}{(M-1)(N+h-1)(M+k-1)}\Big(\frac{\Gamma}{\Lambda}\Big)^3 & \delta_{-}^1e^{-6\Lambda\tau}(1-e^{-\Lambda\tau})^3 \Big] , \\ 
& \forall i \in \{h+1,\dots,M\}. 	 
\end{split}
\end{equation}
Then, the inequality \eqref{F} holds for all the non-leader of the second population, for $t \in [5\tau,6\tau]$. Moreover, one can notice that the right-hand side of \eqref{F} is larger than the right-hand side of \eqref{A}. Thus, the estimate \eqref{F} holds for all the states of the second population. Since the right-hand side of \eqref{F} is larger than the one of \eqref{C} and \eqref{Cbis}, we have that \eqref{F} holds for all the states of the first population too, for $t \in [5\tau,6\tau]$. \\

\emph{Step 2}: Assume now that $L \in \{1,\dots,k\}$ is such that $\langle x_L(s),v \rangle = m_0$ for some $s \in [-\tau,0].$ By continuity, then there exists a closed interval $[\alpha_L,\beta_L] \subset [-\tau,0]$ such that 
$$ m_0 \leq \langle x_L(t),v \rangle \leq \frac{M_0+m_0}{2}, \ t \in [\alpha_L,\beta_L].$$
Eventually choosing a smaller $\sigma$ in \eqref{sigma}, we may assume $\beta_L-\alpha_L=\sigma.$
Consider $i \in \{1,\dots,h\}$ and $t \in [\alpha_L+\tau,\beta_L+\tau].$
Then, 
\begin{equation*}
\begin{split}
\frac{d}{d t} \langle y_{i}(t),v \rangle & = \sum_{j \neq i} b_{ij}(t) \langle y_j(t)-y_i(t),v \rangle + \sum_{\substack{j=1 \\ j \neq L}}^{k} \eta_{ij}(t) \langle x_j(t-\tau)-y_i(t),v \rangle \\
& + \eta_{iL}(t) \langle x_L(t-\tau)-y_i(t),v \rangle \\
& \leq \frac{M-1}{M+k-1}\Lambda(M_0-\langle y_i(t),v \rangle )
+\Big(\sum_{j=1}^{k}\eta_{ij}-\eta_{iL}\Big)(M_0-\langle y_i(t),v \rangle) \\
& +\eta_{iL}(t) \Big( \frac{M_0+m_0}{2}-\langle y_i(t),v\rangle \Big), 
\end{split}
\end{equation*}
and so,
\begin{equation}\label{55}
\begin{split}
\frac{d}{d t} \langle y_{i}(t),v \rangle& \leq \frac{M-1}{M+k-1}\Lambda(M_0-\langle y_i(t),v \rangle)+\frac{k}{M+k-1}(M_0-\langle y_i(t),v \rangle) \\
& -\eta_{iL}(t)(M_0-\langle y_i(t),v \rangle) +\eta_{iL}(t)\Big( \frac{M_0+m_0}{2}-\langle y_i(t),v\rangle \Big) \\
& = \Lambda (M_0-\langle y_i(t),v \rangle) -\eta_{iL}(t)\frac{M_0-m_0}{2} \\
& \leq \Big( \Lambda M_0 -\frac{1}{M+k-1}\Gamma \frac{M_0-m_0}{2}\Big) - \Lambda \langle y_i(t),v \rangle.
\end{split}
\end{equation}
Integrating \eqref{55} on $[\alpha_L+\tau, t]$ with $t \in [\alpha_L+\tau,\beta_L+\tau],$ we have that 
\begin{equation*}
\langle y_i(t),v \rangle \leq e^{-\Lambda(t-\alpha_L-\tau)}\langle y_i(\alpha_L+\tau),v\rangle + \Big(M_0 -\frac{1}{M+k-1}\frac{\Gamma}{\Lambda}\frac{M_0-m_0}{2}\Big)(1-e^{-\Lambda(t-\alpha_L-\tau)}).
\end{equation*}
Putting $t=\beta_L+\tau$ in the equation above, we find 
\begin{equation}\label{56}
\langle y_i(\beta_L+\tau),v \rangle \leq M_0-\frac{1}{M+k-1}\frac{\Gamma}{\Lambda}\frac{M_0-m_0}{2}(1-e^{-\Lambda\sigma}).
\end{equation}
From \eqref{56}, denoting,
\begin{equation} \label{delta_minu2}
\delta_{-}^2:=\frac{1}{2(M+k-1)}\frac{\Gamma}{\Lambda}\Big(1-\frac{m_0}{M_0}\Big)(1-e^{-\Lambda\sigma}), 
\end{equation} 
we deduce that 
\begin{equation}
\langle y_i(\beta_L+\tau),v \rangle \leq (1-\delta_{-}^2)M_0, \ \forall i \in \{1,\dots, h\}.
\end{equation}
Consider now $t \in [\beta_L+\tau,6\tau].$ Then,
\begin{equation}
\begin{split}
\frac{d}{d t} \langle y_i(t),v \rangle & = \sum_{j \neq i} b_{ij}(t) \langle y_j(t)-y_i(t),v \rangle + \sum_{j=1}^{k} \eta_{ij}(t)\langle x_j(t-\tau)-y_i(t),v \rangle \\
& \leq \Lambda(M_0-\langle y_i(t),v \rangle).
\end{split}
\end{equation}
Integrating on $[\beta_L+\tau, t]$ with  $t \in [\beta_L+\tau,6\tau],$ we have that 
\begin{equation}
\begin{split}
\langle y_i(t),v \rangle & \leq e^{-\Lambda(t-\beta_L-\tau)}\langle y_i(\beta_L+\tau),v \rangle+ M_0(1-e^{-\Lambda(t-\beta_L-\tau)})\\
& \leq e^{-\Lambda(t-\beta_L-\tau)}(1-\delta_{-}^2)M_0+ M_0(1-e^{-\Lambda(t-\beta_L-\tau)})\\
& = M_0(1-\delta_{-}^2e^{-\Lambda(t-\beta_L-\tau)}) \\
& \leq M_0(1-\delta_{-}^2e^{-6\tau\Lambda}),
\end{split}
\end{equation}
where the last inequality is obtained observing that $t-\beta_L-\tau \leq 6\tau.$ Then, 
\begin{equation}
\langle y_i(t),v \rangle \leq M_0(1-\delta_{-}^2e^{-6\tau\Lambda}), \ \forall i \in \{1,\dots,h\}, \ t \in [\beta_L+\tau,6\tau], \ i \in \{1,\dots,h\}.
\end{equation}
Consider now $i \in \{h+1,\dots,M\}$ and fix $i_1 \in \{1,\dots,h\}.$ Taking $t \in [\beta_L+\tau,6\tau],$ we have 
\begin{equation}\label{57}
\begin{split}
\frac{d}{d t} \langle y_{i}(t), &v \rangle  = \sum_{\substack{j \neq i \\ j \neq i_1}} b_{ij}(t) \langle y_j(t)-y_{i}(t),v \rangle + b_{ii_1}(t) \langle y_{i_1}(t)-y_{i}(t),v \rangle \\
& \leq \Big(\sum_{j \neq i}b_{ij}(t)-b_{ii_1}(t)\Big)(M_0-\langle y_{i}(t),v \rangle) + b_{ii_1}(t)[M_0(1-\delta_{-}^2e^{-6\tau\Lambda})-\langle y_{i}(t),v \rangle] \\
& \leq \Lambda M_0 \Big[ 1-\frac{1}{M-1}\frac{\Gamma}{\Lambda}\delta_{-}^2e^{-6\tau\Lambda}\Big]-\Lambda \langle y_{i}(t),v \rangle. 
\end{split}
\end{equation}
Applying the Grönwall inequality  on $[\beta_L+\tau, t]$ with $t \in [\beta_L+\tau,6\tau],$ from \eqref{57} we find that
\begin{equation*}
\begin{split}
\langle y_{i}(t),v \rangle & \leq e^{-\Lambda(t-\beta_L-\tau)} \langle y_{i}(\beta_L+\tau),v \rangle + M_0\Big[ 1-\frac{1}{M-1}\frac{\Gamma}{\Lambda}\delta_{-}^2e^{-6\tau\Lambda} \Big](1-e^{-\Lambda(t-\beta_L-\tau)})\\
& \leq M_0\Big[ 1-\frac{1}{M-1}\frac{\Gamma}{\Lambda}\delta_{-}^2e^{-6\tau\Lambda} (1-e^{-\Lambda(t-\beta_L-\tau)})\Big].
\end{split}
\end{equation*}
Shrinking to $t \in [2\tau,6\tau],$ noticing that $t-\beta_L-\tau \geq \tau,$ we have that 
\begin{equation}
\langle y_{i}(t),v \rangle \leq M_0 \Big[1-\frac{1}{M-1}\frac{\Gamma}{\Lambda}\delta_{-}^2e^{-6\tau\Lambda}(1-e^{-\Lambda \tau})\Big], \ \forall i \in \{h+1,\dots,M\}.
\end{equation}
Consider now $i \in \{1,\dots,k\}$ and fix $i_2 \in \{1,\dots,h\}.$ Notice that could be that $i_2=i_1.$ For $t \in [2\tau, 6\tau],$ we have
\begin{equation*}
\begin{split}
\frac{d}{d t} \langle x_{i}(t),v \rangle&  = \sum_{j \neq i} a_{ij}(t) \langle x_j(t)-x_{i}(t),v \rangle + \sum_{\substack{j=1 \\ j \neq i_2}}^{h} \epsilon_{ij}(t) \langle y_j(t-\tau)-x_{i}(t),v \rangle \\
& + \epsilon_{ii_2}(t) \langle y_{i_2}(t)-x_i(t),v \rangle \\
& \leq  \Lambda M_0 \Big[1-\frac{1}{N+h-1}\frac{\Gamma}{\Lambda}\delta_{-}^2e^{-6\tau\Lambda}\Big]-\Lambda \langle x_{i}(t),v \rangle.
\end{split}
\end{equation*} 
Integrating over $[2\tau, t]$ with $t \in [2\tau,6\tau],$ we find
\begin{equation*}
\langle x_{i}(t),v \rangle \leq e^{-\Lambda(t-2\tau)}\langle x_{i}(2\tau),v \rangle + M_0 \Big(1-\frac{1}{N+h-1}\frac{\Gamma}{\Lambda}\delta_{-}^2e^{-6\tau\Lambda}\Big)(1-e^{-\Lambda(t-2\tau)}).
\end{equation*}
Shrinking to $t \in [3\tau,6\tau]$ we have 
\begin{equation} \label{A1}
\langle x_{i}(t),v \rangle \leq M_0 \Big( 1-\frac{1}{N+h-1}\frac{\Gamma}{\Lambda}\delta_{-}^2e^{-6\tau\Lambda}(1-e^{-\Lambda\tau})\Big), \ \forall i \in \{1,\dots,k\}.
\end{equation}
Finally, we consider $i \in \{k+1,\dots,N\}$ and fix $i_3 \in \{1,\dots,k\}.$ Taking $t \in [3\tau,6\tau]$ we have
\begin{equation*}
\begin{split}
\frac{d}{d t} \langle x_i(t),v \rangle& = \sum_{\substack{j \neq i \\ j \neq i_3}} a_{ij}(t) \langle x_j(t)-x_i(t),v \rangle + a_{ii_3}(t)\langle x_{i_3}(t)-x_i(t),v \rangle \\
& \leq  \Lambda M_0 \Big[ 1-\frac{1}{(N-1)(N+h-1)}\Big(\frac{\Gamma}{\Lambda}\Big)^2\delta_{-}^2e^{-6\tau\Lambda}(1-e^{-\Lambda\tau})\Big]-\Lambda \langle x_i(t),v \rangle. 
\end{split}
\end{equation*} 
Integrating the inequality above over $[3\tau, t]$ for $t \in [3\tau,6\tau],$ we find that
\begin{equation*}
\begin{split}
\langle x_i(t),v \rangle & \leq e^{-\Lambda(t-3\tau)} \langle x_i(3\tau),v \rangle \\
& + M_0 \Big[ 1-\frac{1}{(N-1)(N+h-1)}\Big(\frac{\Gamma}{\Lambda}\Big)^2\delta_{-}^2e^{-6\tau\Lambda}(1-e^{-\Lambda\tau})\Big](1-e^{-\Lambda(t-3\tau)}). 
\end{split}
\end{equation*}
Shrinking to $t \in [4\tau,6\tau],$ we finally have that 
\begin{equation}\label{F1}
\langle x_i(t),v \rangle \leq M_0 \Big( 1- \frac{1}{(N-1)(N+h-1)}\Big(\frac{\Gamma}{\Lambda}\Big)^2\delta_{-}^2e^{-6\tau\Lambda}(1-e^{-\Lambda\tau})^2\Big), \ \forall i \in \{k+1,\dots,N\}.
\end{equation} 
As in the previous case, the estimate \eqref{F1} holds for all the possible states of the system for $t \in [5\tau,6\tau].$ 
\\

\emph{Step 3}:
From {\em Step 1} and {\em Step 2}, using the definitions  \eqref{delta_minus1} and \eqref{delta_minu2}, we have then
\begin{equation} \label{N150}
\begin{split}
& \langle x_i(t),v \rangle \leq 
 M_0 \Big[ 1- \frac{1}{2(N-1)(M-1)(M+k-1)(N+h-1)}\Big(\frac{\Gamma}{\Lambda}\Big)^4\times\\ &\hspace{6 cm}\times e^{-6\tau\Lambda}(1-e^{-\Lambda \sigma})(1-e^{-\Lambda\tau})^3\Big(1-\frac{m_0}{M_0}\Big)\Big], 
\end{split}
\end{equation}
$ \forall i \in \{1,\dots,N\},$ $t \in [5\tau,6\tau],$
and 
\begin{equation} \label{N250}
\begin{split}
& \langle y_i(t),v \rangle \leq M_0 \Big[ 1- \frac{1}{2(N-1)(M-1)(M+k-1)(N+h-1)}\Big(\frac{\Gamma}{\Lambda}\Big)^4\times\\
&\hspace{6 cm}\times e^{-6\tau\Lambda}(1-e^{-\Lambda \sigma})(1-e^{-\Lambda\tau})^3\Big(1-\frac{m_0}{M_0}\Big)\Big], 
\end{split}
\end{equation}
$\forall i \in \{1,\dots,M\},$ $t \in [5\tau,6\tau].$
Analogous estimates can be obtained if $m_0$ or $M_0$ are attained by scalar products 
$\langle y_i, v\rangle, $ $i=1, \dots, M.$ 
Since 
$$ (N-1)(M-1)(N+h-1)(M+k-1) \leq 4N^4, $$
from \eqref{N150} and \eqref{N250}, we obtain the second inequalities of \eqref{x} and \eqref{y}, respectively, with

\begin{equation} \label{gamma1}
\Gamma_1:=\frac{1}{8N^4}\Big(\frac{\Gamma}{\Lambda}\Big)^4 e^{-6\tau\Lambda}(1-e^{-\Lambda\tau})^3(1-e^{-\Lambda\sigma}).
\end{equation}

\emph{Step 4}:
Now, we focus on the lower bound in \eqref{x} and \eqref{y}. \\
Assume that there exists $R \in \{ k+1,\dots,N\}$ such that $\langle x_R(0),v \rangle = M_0.$ Then, as before, we have  that 
$$ \frac{M_0+m_0}{2} \leq \langle x_R(t),v \rangle \leq M_0, \ t \in [0,\sigma],$$
with $\sigma$ as in \eqref{sigma}.
Using similar arguments to the ones in  {\em Step 1}, we find that,  for $t \in [5\tau,6\tau],$ 
\begin{equation}\label{B0}
\begin{split}
\langle x_{i}(t),v \rangle \geq m_0 \Big[ 1+\frac{1}{(M-1)(N+h-1)(M+k-1)}\Big( \frac{\Gamma}{\Lambda}\Big)^3 \delta_{+}^1e^{-6\tau\Lambda}& (1-e^{-\Lambda\tau})^3 \Big], \\ 
& \forall i \in \{1,\dots,N\},
\end{split}
\end{equation}
and
\begin{equation}\label{B1}
\begin{split}
\langle y_{i}(t),v \rangle \geq m_0 \Big[ 1+\frac{1}{(M-1)(N+h-1)(M+k-1)}\Big( \frac{\Gamma}{\Lambda}\Big)^3 \delta_{+}^1e^{-6\tau\Lambda} & (1-e^{-\Lambda\tau})^3 \Big], \\
& \forall i \in \{1,\dots,M\},
\end{split}
\end{equation}
with 
\begin{equation} \label{delta_plus1}
\delta_{+}^1:=\frac{1}{2(N-1)}\frac{\Gamma}{\Lambda}(1-e^{-\Lambda\sigma})\Big(\frac{M_0}{m_0}-1 \Big).
\end{equation}

Suppose, instead,  that $\langle x_R(t),v \rangle = M_0$ for some $R \in \{1,\dots,k\}$ and for some $t \in [-\tau,0].$ Then, by continuity, there exists a closed interval $[\alpha_R,\beta_R]\subset[-\tau,0]$  such that 
$$ \frac{M_0+m_0}{2} \leq \langle x_R(t),v \rangle \leq M_0, \ t \in [\alpha_R,\beta_R].$$
Eventually choosing a smaller $\sigma$ in \eqref{sigma} above, we may assume that
$\beta_R-\alpha_R=\sigma.$ 
Arguing analogously to {\em Step 2}, we can obtain, for $t \in [5\tau,6\tau],$   
\begin{equation} \label{B}
\langle x_{i}(t),v \rangle \geq m_0 \Big[ 1+\frac{1}{(N-1)(N+h-1)}\Big( \frac{\Gamma}{\Lambda}\Big)^2\delta_{+}^2 e^{- 6\tau\Lambda}  (1-e^{-\Lambda \tau})^2 \Big], 
\quad \forall i \in \{1,\dots,N\},
\end{equation}
and 
\begin{equation} \label{B2}
\langle y_{i}(t),v \rangle \geq m_0 \Big[ 1+\frac{1}{(N-1)(N+h-1)}\Big( \frac{\Gamma}{\Lambda}\Big)^2\delta_{+}^2 e^{- 6\tau\Lambda}
 (1-e^{-\Lambda \tau})^2 \Big], 
\quad \forall i \in \{1,\dots,M\},
\end{equation}
with 
\begin{equation} \label{delta_plus2}
 \delta_{+}^2:= \frac{1}{2(M+k-1)}\frac{\Gamma}{\Lambda}(1-e^{-\Lambda\sigma})\Big(\frac{M_0}{m_0}-1\Big).
 \end{equation}
Now, note that the right-hand side of \eqref{B0} and \eqref{B1} are smaller than the right-hand side of \eqref{B} and \eqref{B2} and so, using the definitions \eqref{delta_plus1} and \eqref{delta_plus2}, we have that, for $t \in [5\tau,6\tau],$
\begin{equation*} 
\begin{split}
&\langle x_i(t),v \rangle \geq m_0 \Big[ 1+\frac{1}{2(N-1)(M-1)(M+k-1)(N+h-1)}\Big(\frac{\Gamma}{\Lambda}\Big)^4 \times \\
& \hspace{6 cm} \times e^{-6\tau\Lambda}(1-e^{-\Lambda \sigma})(1-e^{-\Lambda\tau})^3\Big(\frac{M_0}{m_0}-1\Big)\Big], 
\end{split}
\end{equation*}
$\forall i \in \{1,\dots,N\},$ 
and 
\begin{equation*} 
\begin{split}
& \langle y_i(t),v \rangle \geq m_0 \Big[ 1+\frac{1}{2(N-1)(M-1)(M+k-1)(N+h-1)}\Big(\frac{\Gamma}{\Lambda}\Big)^4 \times \\
& \hspace{6 cm} \times e^{-6\tau\Lambda}(1-e^{-\Lambda \sigma})(1-e^{-\Lambda\tau})^3\Big(\frac{M_0}{m_0}-1\Big)\Big], 
\end{split}
\end{equation*}
$ \forall i \in \{1,\dots, M\}.$  Using the definition  \eqref{gamma1}, from the last two inequalities we obtain the lower bounds in the lemma's statement. This completes the proof.
\end{proof}

\section{Asymptotic consensus}\label{cons}
In this section, we will show the asymptotic convergence to consensus of solutions to  \eqref{eq1}.

\begin{Definition}
For fixed  $v \in \mathbb{R}^d$ unit vector, for all $n \in \mathbb{N},$ one can define the quantities $M_n$ and $m_n$ as follows:
\begin{equation} \label{mn}
\begin{split}
m_n:= \min \Big \{ & \min_{i=1,\dots,k} \min_{t \in I_n} \langle x_i(t),v \rangle, \min_{i=k+1,\dots,N} \langle x_i(6n\tau),v \rangle, \\
&\hspace{3 cm} \min_{i=1,\dots,h} \min_{t \in I_n} \langle y_i(t),v \rangle, \min_{i=h+1,\dots,M} \langle y_i(6n\tau),v \rangle \Big \},
\end{split}
\end{equation}
\begin{equation} \label{Mn}
\begin{split}
M_n:= \max \Big \{ & \max_{i=1,\dots,k} \max_{t \in I_n} \langle x_i(t),v \rangle, \max_{i=k+1,\dots,N} \langle x_i(6n\tau),v \rangle, \\
& \hspace{3 cm} \max_{i=1,\dots,h} \max_{t \in I_n} \langle y_i(t),v \rangle, \max_{i=h+1,\dots,M} \langle y_i(6n\tau),v \rangle \Big \},
\end{split}
\end{equation}
with $I_n=[(6n-1)\tau,6n\tau].$ Notice that for $n=0$ we recover \eqref{m0} and \eqref{M0}.
\end{Definition}

\begin{Theorem}\label{main} Let $(x_i(t),y_j(t)),\ i=1,\dots,N, \ j=1,\dots,M,$ be a global classical solution to the system \eqref{eq1} with continuous initial conditions \eqref{init}.Then,  $(x_i(t),y_j(t)),\ i=1,\dots,N, \ j=1,\dots,M,$  achieve an asymptotic consensus in the sense of Definition \ref{def1}.
\end{Theorem}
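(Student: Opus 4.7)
The plan is to iterate Lemma \ref{case1} on successive time windows $[6n\tau,6(n+1)\tau]$ in order to produce a geometric contraction $M_n-m_n\to 0$ for every fixed unit vector $v\in\mathbb{R}^d$, and then to lift this directional control to the full diameter $d(t)$. Fix once and for all a unit vector $v$.

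For the iteration step, I would restart the system at time $t_n:=6n\tau$, using the continuous history on $I_n=[(6n-1)\tau,6n\tau]$ as new leader data and the snapshots at $6n\tau$ as new non-leader data; this puts the restarted system in precisely the setting of Lemma \ref{case1}, with $(m_n,M_n)$ from \eqref{mn}--\eqref{Mn} playing the role of $(m_0,M_0)$. Lemma \ref{case1} requires $m_n>0$, and one must also control the constant $\Gamma_1$ from \eqref{gamma1} uniformly in $n$. I would achieve both by an $n$-dependent translation $x_i\mapsto x_i-c_n v$, $y_j\mapsto y_j-c_n v$ with $c_n:=2m_n-M_n$, under which \eqref{eq1} is invariant modulo the harmless relabelling $\tilde\psi(z,w):=\psi(z+c_n v,w+c_n v)$ (and similarly for $\psi^*,\phi,\phi^*$). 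The translated extremes are $\tilde m_n=M_n-m_n$ and $\tilde M_n=2(M_n-m_n)$, so the ratio $\tilde m_n/\tilde M_n=1/2$ is independent of $n$, which freezes $\sigma$ in \eqref{sigma} at the $n$-independent value $\sigma_*:=\min\{\tau,1/(8\Lambda)\}$. Lemma \ref{2.4} together with $|c_n|\le 3C_0$ keeps all shifted states in a fixed ball, so the shifted influence functions admit a uniform positive lower bound, yielding a uniform version $\Gamma_*\in(0,1)$ of \eqref{gamma1}. Applying Lemma \ref{case1} to the shifted restarted system and undoing the translation (which preserves differences) gives
\begin{equation*}
M_{n+1}-m_{n+1}\le (1-\Gamma_*)\,(M_n-m_n),
\end{equation*}
and hence $M_n-m_n\le (1-\Gamma_*)^n(M_0-m_0)$ by induction.

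The main obstacle of the program is precisely this uniformity of the contraction factor. Applied naively to the untranslated system, the parameter $\sigma$ in \eqref{sigma} collapses to zero as soon as $(M_n-m_n)/M_n\downarrow 0$, so the factor $(1-e^{-\Lambda\sigma})$ in \eqref{gamma1} tends to zero and the geometric decay is destroyed. The renormalising translation at every step is designed exactly to keep $\sigma$ bounded below by a positive constant; checking that the translation is bounded (so that the influence-function lower bounds also remain uniform, via continuity on a fixed ball) is the non-routine verification needed to make the iteration go through.

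To conclude, I would invoke Lemma \ref{2.1} on the restarted (translated) system to get $\langle x_i(t),v\rangle,\ \langle y_j(t),v\rangle\in [m_n,M_n]$ for every $t\ge 6n\tau$ and every admissible index. Consequently each of $\langle x_i(t)-x_j(t),v\rangle$, $\langle y_i(t)-y_j(t),v\rangle$ and $\langle x_i(t)-y_j(t),v\rangle$ is bounded by $M_n-m_n$ on $[6n\tau,+\infty)$. Using $|x-y|=\sup_{|v|=1}\langle x-y,v\rangle$ and noting that $M_0-m_0\le D_0$ uniformly in unit $v$, where $D_0$ is the initial global diameter, taking the supremum over unit vectors yields
\begin{equation*}
d(t)\le (1-\Gamma_*)^n D_0\quad\text{for every }t\ge 6n\tau,
\end{equation*}
so $d(t)\to 0$ as $t\to\infty$, establishing consensus in the sense of Definition \ref{def1}.
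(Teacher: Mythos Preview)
Your proof is correct and takes a genuinely different route from the paper's.

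The paper iterates Lemma~\ref{case1} without any renormalisation, obtaining $D_{n+1}\le(1-\Gamma_{1n})D_n$ with a step-dependent factor $\Gamma_{1n}$ tied to $\sigma_n=\min\{\tau,\,D_n/(4\Lambda M_0)\}$; in particular $\Gamma_{1n}\to 0$ as $D_n\to 0$. The paper then argues indirectly: $\{D_n\}$ is non-negative and non-increasing, so it has a limit $D$, and passing to the limit in $D_{n+1}\le(1-\tilde\Gamma_1(D_n))D_n$ yields $D\le(1-\tilde\Gamma_1(D))D$, forcing $\tilde\Gamma_1(D)=0$ and hence $D=0$. Consensus is then deduced componentwise via the canonical basis vectors. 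Your translation $c_n=2m_n-M_n$, by fixing $\tilde m_n/\tilde M_n=1/2$, freezes $\sigma$ at the uniform value $\sigma_*=\min\{\tau,1/(8\Lambda)\}$ and produces a uniform contraction factor $\Gamma_*\in(0,1)$ independent of $n$ and of $v$. This buys an explicit exponential rate $d(t)\le(1-\Gamma_*)^{\lfloor t/(6\tau)\rfloor}\cdot 2C_0$, which the paper's limit argument does not deliver. One minor simplification: since the weights $a_{ij}(t),b_{ij}(t),\epsilon_{ij}(t),\eta_{ij}(t)$ are evaluated at the untranslated trajectory and therefore literally unchanged by your shift, the original $\Gamma$ from \eqref{Gamma} already serves as the uniform lower bound at every step; the detour through a fixed enlarged ball is not needed, though it is not wrong.
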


\begin{proof}
Using \eqref{mn} and \eqref{Mn}, we define the quantities $D_n:=M_n-m_n$. 
Moreover, let us denote
$$ \Gamma_{1n}:= \frac{1}{8N^4}\Big(\frac{\Gamma}{\Lambda}\Big)^4 e^{-6\tau\Lambda}(1-e^{-\Lambda\tau})^3(1-e^{-\Lambda\sigma_n}),$$
where $\sigma_n:= \min \{ \tau, \frac{M_n-m_n}{4\Lambda M_0} \},$ for $n\ge 1,$ and $\sigma_0=\sigma$ as in \eqref{sigma}.   Then, $\Gamma_1 = \Gamma_{10}$ and $\Gamma_{1n} \in (0,1)$ if $M_n>m_n.$ Now, we use Lemma \ref{case1} with $t \in I_n, \ n \in \mathbb{N}$. For $n=1,$ we have
\begin{equation*}
D_1=M_1-m_1 \leq M_0 - \frac{\Gamma_{10}}{2}(M_0-m_0)-m_0-\frac{\Gamma_{10}}{2}(M_0-m_0)=(M_0-m_0)(1-\Gamma_{10})=D_0(1-\Gamma_{10}).
\end{equation*}
So, we find that $D_1 \leq (1-\Gamma_{10})D_0.$ Iterating the process of Lemma \ref{case1} we can find 
$$ D_{n+1}\leq (1-\Gamma_{1n})D_n, \ \forall n \in \mathbb{N}.$$
Let us denote
$$ \tilde{\sigma}(D):=\min \Big\{ \tau, \frac{D}{4\Lambda M_0} \Big\},$$
so that $\sigma_n = \tilde{\sigma}(D_n), \ \forall n \in \mathbb{N},$ $n\ge 1.$
Moreover, let be 
$$ \tilde{\Gamma}_{1}(D):= \frac{1}{8N^4}\Big(\frac{\Gamma}{\Lambda}\Big)^4 e^{-6\tau\Lambda}(1-e^{-\Lambda\tau})^3(1-e^{-\Lambda\tilde{\sigma}(D)}),$$
so that $\Gamma_{1n} = \tilde{\Gamma}_{1}(D_n),$ $\forall n \in \mathbb{N}.$  Therefore, we have
$$ D_{n+1}\leq (1-\tilde{\Gamma}(D_n))D_n.$$
Then, $\{ D_n \}_{n \in \mathbb{N}}$ is a non-negative and decreasing sequence. Let us calling $D$ the limit of $\{ D_n \}_{n \in \mathbb{N}}$ and, passing to the limit as $n$ goes to $+\infty$ in the above estimate, we find 
$$ D \leq (1-\tilde{\Gamma}_1(D))D,$$
that is true if and only if $\tilde{\Gamma}_1(D) \leq 0.$ This gives $D=0$ and, noticing that, 
$$ \langle x_i(t)-x_j(t),v \rangle \leq M_n-m_n = D_n$$ for all $i,j=1,\dots,N$, we have that 
$ \langle x_i(t)-x_j(t),v \rangle  \rightarrow 0 $
as $t \rightarrow + \infty$ and for all $i,j=1,\dots,N.$ The same holds for $\langle y_i(t)-y_j(t),v \rangle, $ with $i,j=1,\dots,M,$ and for $\langle x_i(t)-y_j(t),v \rangle, $ with $i=1,\dots,N$ and $j=1,\dots,M.$ \\
Notice that the result above can be obtained for each unit vector $v \in \mathbb{R}^d$. In particular, by considering  the canonical basis of $\mathbb{R}^d,$  $\{ e_h\}_{h=1}^d,$ and taking $v= e_h$ we have that
 $$|\langle x_i(t)-x_j(t), e_h \rangle| \rightarrow 0, $$
as $t \rightarrow + \infty,$ for all $i,j=1,\dots,N$ and $h=1, \dots, d.$ The same happens to 
$\vert \langle y_i(t)-y_j(t), e_h \rangle|,$ for all $i,j=1,\dots,M,$  and to  $\vert\langle x_i(t)-y_j(t), e_h \rangle|,$ for all $i=1,\dots,N$ and $j=1,\dots, M.$
Then, the system achieves asymptotic consensus. 
\end{proof}

\section{Numerical simulations and application to ecology}\label{num}

In this section, we present some numerical tests for the system \eqref{eq1} in the one-dimensional case, i.e. $d=1.$ We consider the weight functions $a_{ij}(t)$ and $b_{ij}(t)$ defined by 
$$ \psi(r,r')= \psi^*(r,r'):=\tilde{\psi}(|r-r'|), \quad r,r'\in [0, +\infty).$$
Meanwhile, the weight functions $\epsilon_{ij}(t)$ and $\eta_{ij}(t)$ are assumed to be constant.

In particular, we consider the functions
\begin{equation}
\begin{split}
& \tilde{\psi}(r):= e^{-(r-1)^2}, \ r \in [0,+\infty), \\
& \epsilon_{ij}(t):= \frac{K_1}{N+h-1}, \ \forall \ i \in \{1,\dots,k\}, \ j \in \{1,\dots,h\} \\
& \eta_{ij}(t):=\frac{K_2}{M+k-1}, \ \forall \ i \in \{1,\dots,h\},\ j \in \{1,\dots,k\},
\end{split}
\end{equation}
with $K_1, K_2$ positive constants.

\begin{figure}
\centering
\includegraphics[scale=0.55]{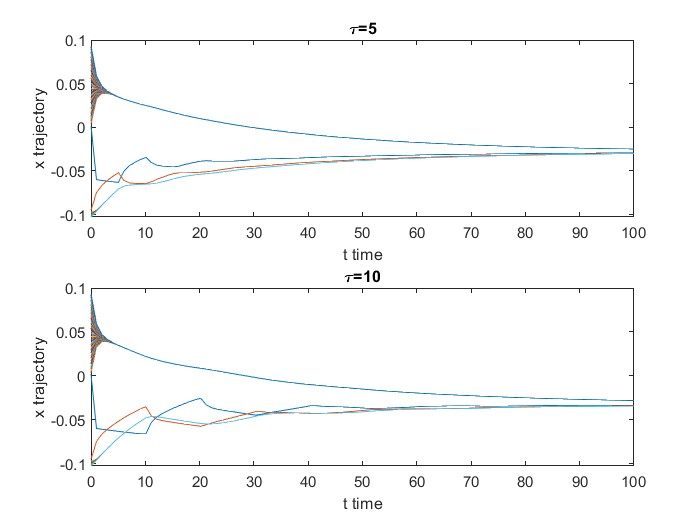}
\includegraphics[scale=0.55]{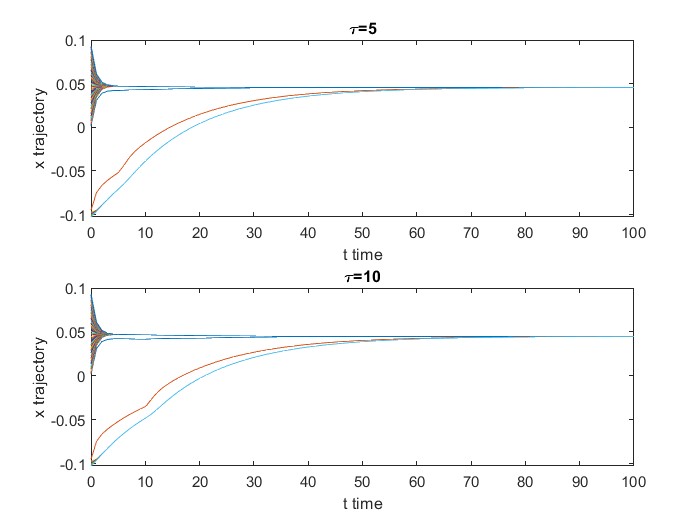}
\caption{Time evolution of solutions with different time delays, number of agents $N=50, \ M=5$, number of leaders $k=h=1.$}
\end{figure}

\begin{figure}
\centering
\includegraphics[scale=0.55]{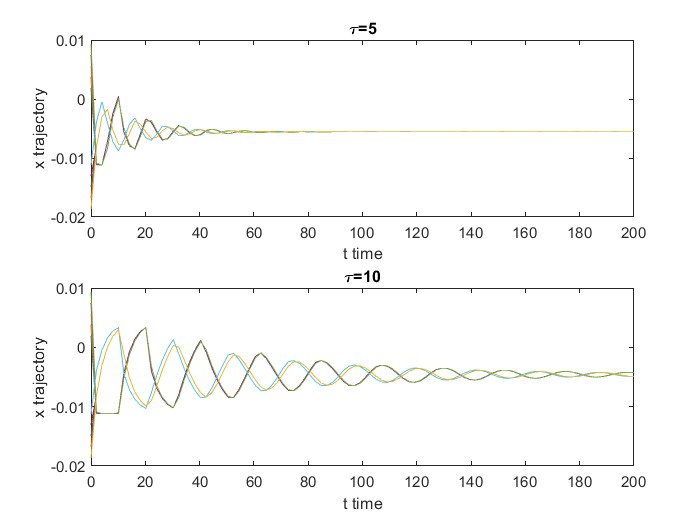}
\includegraphics[scale=0.55]{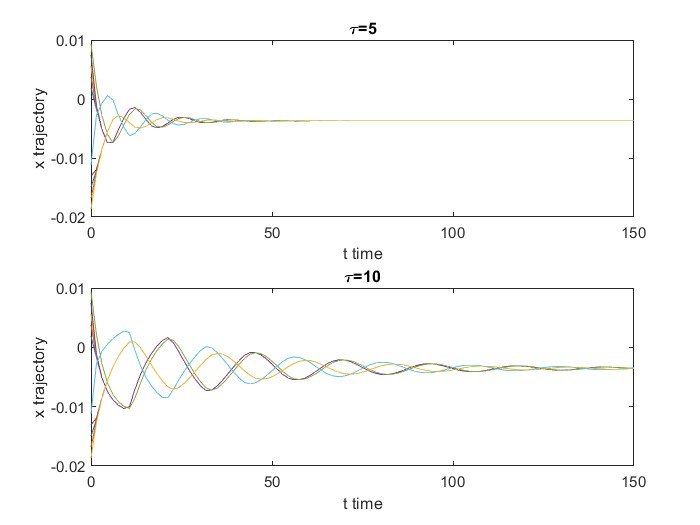}
\caption{Time evolution of solutions with different time delays, number of agents $N=M=5$, number of leaders $k=4,\ h=1.$}
\end{figure}

In Figure 1, the top two graphics illustrate a scenario where one population is larger than the other, yet the influence of the leaders from the smaller population overpowers that of the larger one ($K_1 \gg K_2$). As expected, the larger population tends to converge towards the consensus of the smaller population. The bottom two graphics depict a similar scenario but with equal influence from both sets of leaders ($K_1=K_2$). In this case, it is observed that the larger population pulls the smaller one towards its consensus.

Moving to Figure 2, the top two graphics illustrate a scenario where the total number of agents in both populations is equal, but the distribution of leaders differs ($k=4, \ h=1$). Furthermore, the solitary leader in the second population holds more influence than the others ($K_2\gg K_1$). It is noticeable that the system tends to a  consensus closer to the initial average of the population with only one leader. This is due to the different normalization factors of the weight functions.
In the bottom two graphics of Figure 2, a similar scenario is presented, but with equally strong influence from leaders on both sides ($K_1=K_2$). Here, it is observed that the consensus converges towards a mean value of the initial states. \\

In this paper, we used a Hegselmann-Krause type model to explore social dynamics and opinion formation in a set of two interacting populations, particularly in the context of discussing ecology strategies and sustainable development. We conducted simulations to investigate two different scenarios:
\begin{enumerate}

\item Equal Influence Scenario (Figure 3, Left): In this scenario, all agents are considered equal, meaning the total population coincides with the leaders' subgroup. However, only one group has a mild influence on the other group  ($k=h=N=M$, $K_1=0.3$, $K_2=0$). This could represent a situation with only one group spreading ecological information on social media.

\item Asymmetric Influence Scenario (Figure 3, Right): In this scenario, one group has a significant influence over a subgroup of the other population ($K_1=30$, $K_2=0$, $k<h$). This could represent a scenario where a community of scientists interacts with a leading group in the other population, e.g. a politicians' group.

\end{enumerate}

We observed that with a fixed time delay ($\tau=5$), consensus is reached more rapidly in the second scenario. This implies that exerting a strong influence on decision-makers who in turn influence the entire population leads to a faster consensus formation.
Therefore, we conclude that the most effective strategy for raising awareness on ecological topics is to exert a strong influence on key decision-makers who can influence the entire population. This highlights the importance of targeting influential individuals or groups in shaping public opinion and fostering consensus on issues, e.g., related to ecology and sustainability.
\begin{figure}
\centering
\includegraphics[scale=0.3]{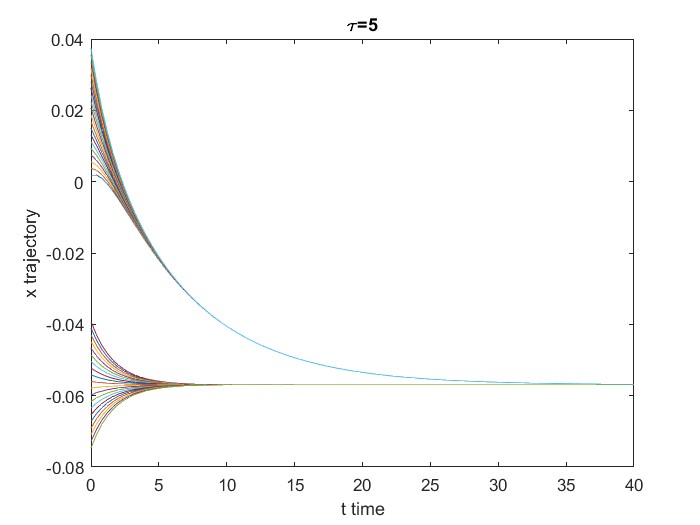}
\includegraphics[scale=0.3]{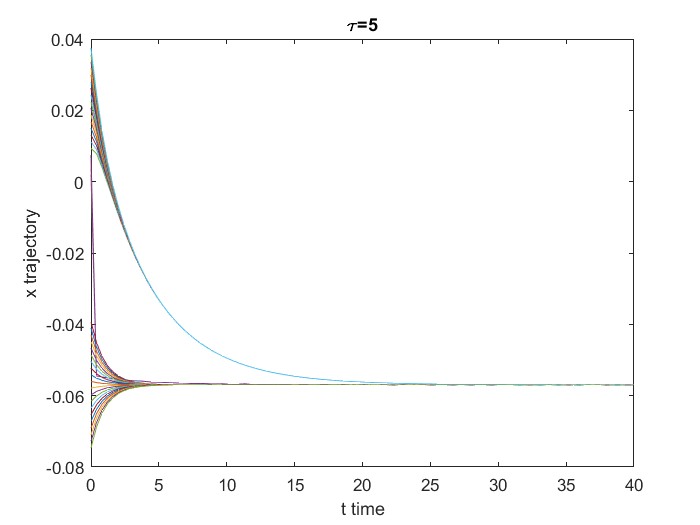}
\caption{Opinion formation in an ecological discussion: to the left, number of agents $N=M=20$, number of leaders $k=h=20;$ to the right, number of agents $N=M=20$, number of leaders $k=4, \ h=20.$ }
\end{figure}

\bigskip
\noindent {\bf Acknowledgements.} The authors are members of  {\it Gruppo Nazionale per l'Analisi Ma\-te\-matica, la Probabilit\`a e le loro Applicazioni (GNAMPA)} of the Istituto Nazionale di Alta Matematica (INdAM). They are also  members of {\it UMI ``CliMath"}.

C. Pignotti  is partially supported by PRIN 2022  (2022238YY5) {\it Optimal control problems: analysis,
approximation and applications}, PRIN-PNRR 2022 (P20225SP98) {\it Some mathematical approaches to climate change and its impacts}, and by INdAM GNAMPA Project {\it ``Modelli alle derivate parziali per interazioni multiagente non 
simmetriche"}(CUP E53C23001670001).

\end{document}